\newtheorem{theorem}{Theorem}
\newtheorem{corollary}{Corollary}
\theoremstyle{remark}
\newtheorem{remark}{Remark}
\newcommand {\bbC} {\mathbb{C}}
\newcommand {\bbF} {\mathbb{F}}
\newcommand {\bbN} {\mathbb{N}}
\newcommand {\bbQ} {\mathbb{Q}}
\newcommand {\bbR} {\mathbb{R}}
\newcommand {\bbZ} {\mathbb{Z}}
\newcommand {\rmN} {\mathrm{N}}
\newcommand {\calP} {\mathcal{P}}
\newcommand {\pg} {\mathfrak{p}}
\newcommand {\Res}{\mathop{\mathrm{Res}}}
\newcommand {\ch}{\mathop{\mathrm{ch}}}
\newcommand {\sh}{\mathop{\mathrm{sh}}}
\renewcommand {\Re}{\mathop{\mathrm{Re}}}
\renewcommand {\Im}{\mathop{\mathrm{Im}}}
\newcommand{\N}{\mathrm{N}}
\title{On logarithmic derivatives of zeta functions in families of global fields}
\author{Philippe Lebacque}
\address{
Philippe Lebacque
\newline \indent
School of Mathematical Sciences of the University of Nottingham 
}
\email{philippe.lebacque@nottingham.ac.uk }
\author{Alexey Zykin}
\address{
Alexey Zykin
\newline \indent
Institut de Math\'ematiques de Luminy
\newline \indent
Mathematical Institute of the Russian Academy of Sciences
\newline \indent
Laboratoire Poncelet (UMI 2615)
}
\email{zykin@iml.univ-mrs.fr}
\date{}
\thanks{The first author was partially supported by EPSRC grant EP/E049109  "Two dimensional
adelic analysis". The second author was partially supported by the grants RFBR 07-01-92211-CNRSa, RFBR 07-01-00051a and RFBR 08-07-92495-CNRSa.}
\begin{document}
\begin{abstract}
We prove a formula for the limit of logarithmic derivatives of zeta functions in families of global fields with an explicit error term. This can be regarded as a rather far reaching generalization of the explicit Brauer--Siegel theorem both for number fields and function fields.
\end{abstract}
\maketitle
\section{Introduction}
The goal of this paper is to prove a formula for the limit of logarithmic derivatives of zeta functions in families of global fields (assuming GRH in the number field case) with an explicit error term. This result is close in spirit both to the explicit Brauer--Siegel and Mertens theorems from \cite{Leb} as well as to the generalized Brauer--Siegel type theorems from \cite{Zyk}. We also improve the error term in the explicit Brauer--Siegel theorem from \cite{Leb}, allowing its dependence on the family of global fields under consideration.

Throughout the paper the constants involved in $O$ and $\ll$  are absolute and effective (and, in fact, not very large). Let $K$ be a global field that is a finite extension of $\bbQ$ or a finite extension of $\bbF_r(t),$ in the latter case $K=\bbF_r(X)$ for a smooth absolutely irreducible projective curve over $\bbF_r$, where $\bbF_r$ is the finite field with $r$ elements. We will often use the acronyms NF or FF for the statements proven in the number field and the function field cases respectively. By $\log(x)$ we understand the natural logarithm $\ln(x)$ in the number field case and $\log_r(x)$ in the function field case. We shall often omit the index $K$ in our notation in cases when it creates no confusion.

For a number field $K$ let $n_K$ and $D_K$ denote its degree and its discriminant respectively. Let $g_K$ be the genus of a function field, that is the genus of the corresponding smooth projective curve and let $g_K=\log \sqrt{D_K}$ in the number field case. Let $\calP(K)$ be the set of (finite) places of $K$ and let $\Phi_{q}=\Phi_{q}(K)$ be the number of places of norm $q$ in $K$, i. e. $\Phi_{q}=|\{\pg\in \calP(K)| \rmN \pg=q\}|.$ In the number field case we denote by $\Phi_\bbR=r_1$ and $\Phi_\bbC=r_2$ the number of real and complex places of $K$ respectively.

Recall that the zeta function of a global field $K$ is defined as $$\zeta_{K}(s)=\prod\limits_{q}(1-q^{-s})^{-\Phi_{q}},$$ where the product runs over all prime powers $q.$ We denote by $Z_K(s)=-\sum\limits_{q}\frac{\Phi_{q}\log q}{q^{s}-1}$ its logarithmic derivative. One knows that $\zeta_K(s)$ can be analytically continued to the whole complex plane and satisfies a functional equation relating $\zeta_K(s)$ and $\zeta_K(1-s).$ Furthermore, in the function field case $\zeta_{K}(s)$ is a rational function of $t=r^{-s}$. Moreover, 
\begin{equation}
\label{rationality}
\zeta_{K}(s)=\frac{\prod\limits_{j=1}^{g}(\pi_j t - 1)(\bar{\pi}_j t - 1)}{(1-t)(1-rt)},
\end{equation}
and $|\pi_j|=\sqrt{r}$ (the Riemann hypothesis). For the rest of the paper we will assume that the Generalized Riemann Hypothesis is true for zeta functions of number fields, that is all the non-trivial zeroes of $\zeta_K(s)$ are on the line $\Re s = \frac{1}{2}.$

Here are our first main results:

\begin{theorem}[FF]
\label{limitff}
For a function field $K,$ an integer $N\geq 10$ and any $\epsilon=\epsilon_0+i\epsilon_1$ such that $\epsilon_0=\Re \epsilon > 0$ we have:
$$\sum_{f=1}^{N}\frac{f\Phi_{r^f}}{r^{(\frac{1}{2}+\epsilon)f}-1} + Z_{K}\left(\frac{1}{2}+\epsilon\right)+\frac{1}{r^{-\frac{1}{2}+\epsilon}-1} =O\left(\frac{g_K}{\epsilon_0 r^{\epsilon_0 N}} \right) + O\left(r^{\frac{N}{2}}\right).$$
\end{theorem}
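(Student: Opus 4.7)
The plan is to use the rationality of $\zeta_K(s)$ given in \eqref{rationality} to rewrite $Z_K\bigl(\tfrac{1}{2}+\epsilon\bigr) + \tfrac{1}{r^{-1/2+\epsilon}-1}$ as an absolutely convergent power series in $t = r^{-1/2-\epsilon}$, and then compare with the partial sum written in the same variable. Logarithmic differentiation of \eqref{rationality} (with $dt/ds = -t\ln r$) yields, after dividing by $\ln r$,
$$Z_K(s) = -\frac{t}{1-t} - \frac{rt}{1-rt} + \sum_{j=1}^{g}\left(\frac{\pi_j t}{1-\pi_j t} + \frac{\bar\pi_j t}{1-\bar\pi_j t}\right).$$
Since $\tfrac{rt}{1-rt} = \tfrac{1}{r^{-1/2+\epsilon}-1}$ at $s=\tfrac{1}{2}+\epsilon$, moving that term to the left cancels the pole at $s=1$. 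For $\epsilon_0 > 0$ the bounds $|t|<1$ and $|\pi_j t| = r^{-\epsilon_0} < 1$ hold, so each remaining fraction expands geometrically; collecting powers of $t$ and using $\sum_j (\pi_j^k + \bar\pi_j^k) = r^k+1-N_k$, where $N_k$ denotes the number of points of the underlying curve over $\bbF_{r^k}$, I obtain
$$Z_K\bigl(\tfrac{1}{2}+\epsilon\bigr) + \frac{1}{r^{-1/2+\epsilon}-1} = \sum_{k \geq 1} t^k(r^k - N_k),$$
which converges absolutely by the Weil bound $|r^k - N_k| \leq 2g_K r^{k/2}$.

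In parallel, I expand the partial sum via $\tfrac{1}{r^{(1/2+\epsilon)f}-1} = \sum_{m \geq 1}t^{fm}$; swapping the finite sum over $f$ with the absolutely convergent inner sums gives
$$\sum_{f=1}^N \frac{f\Phi_{r^f}}{r^{(1/2+\epsilon)f}-1} = \sum_{k \geq 1} t^k a_k, \qquad a_k := \sum_{f\mid k,\, f \leq N} f\Phi_{r^f}.$$
Adding the two identities, the left-hand side $\Sigma$ of the theorem becomes $\sum_{k\geq 1} t^k(a_k + r^k - N_k)$. Since $N_k = \sum_{f\mid k} f\Phi_{r^f}$, the bracket equals $r^k$ for $k \leq N$ (all divisors then satisfy $f\leq N$) and $r^k - b_k$ for $k > N$, where $b_k := \sum_{f\mid k,\, f>N} f\Phi_{r^f}$. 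Therefore
$$\Sigma = \sum_{k=1}^N (rt)^k + \sum_{k > N} t^k(r^k - b_k).$$

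It remains to bound both pieces. The finite sum is dominated termwise by $|rt|^k = r^{(1/2-\epsilon_0)k} \leq r^{k/2}$, giving $\left|\sum_{k=1}^N (rt)^k\right| \leq \frac{\sqrt{r}}{\sqrt{r}-1}\, r^{N/2} = O(r^{N/2})$ with an absolute constant. For the tail, I split $r^k - b_k = (r^k - N_k) + a_k$: the Weil bound gives $|t|^k|r^k-N_k| \leq 2g_K r^{-\epsilon_0 k}$, and summing over $k > N$ together with $1 - r^{-\epsilon_0} \gg \epsilon_0$ yields $O(g_K r^{-\epsilon_0 N}/\epsilon_0)$; the trivial uniform bound $a_k \leq \sum_{f=1}^N N_f = O(r^N + g_K r^{N/2})$ combined with $\sum_{k > N} r^{-(1/2+\epsilon_0)k} = O(r^{-(1/2+\epsilon_0)N})$ contributes $O(r^{N/2}) + O(g_K r^{-\epsilon_0 N})$, both absorbed into the stated errors.

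The main obstacle is the bookkeeping at the cancellation step. The naive expansions $\sum_k (rt)^k$ and $\sum_k t^k N_k$ both diverge on most of $0 < \epsilon_0 \leq \tfrac{1}{2}$, yet their difference is the absolutely convergent series $\sum_k t^k(r^k - N_k)$; routing the rearrangement through the intermediate series $\sum_k t^k a_k$ and the closed form for $Z_K + \tfrac{1}{r^{-1/2+\epsilon}-1}$ is what legitimately produces the split into a short geometric sum and a small tail. The Weil bound is exactly what delivers the $g_K/\epsilon_0$ dependence in the error.
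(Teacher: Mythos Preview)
Your argument is correct and is essentially the paper's proof repackaged: the paper applies the function-field explicit formula with the truncated test sequence $v_n=r^{-n\epsilon}$ for $n\le N$ and decomposes into sums $S_0,S_1,S_2,S_3$, while you expand both sides directly as power series in $t=r^{-1/2-\epsilon}$; your finite sum $\sum_{k\le N}(rt)^k$ is the paper's $S_1$, your Weil-bounded tail $\sum_{k>N}t^k(r^k-N_k)$ is (up to the harmless $\sum_{k>N}t^k$) the paper's remainder $R_3$, and your $\sum_{k>N}t^k a_k$ is exactly the paper's $R_0$. One minor correction: the inequality giving the $1/\epsilon_0$ factor should read $r^{\epsilon_0}-1\ge\epsilon_0\ln r\gg\epsilon_0$ rather than $1-r^{-\epsilon_0}\gg\epsilon_0$ (the latter fails for large $\epsilon_0$), but since $\sum_{k>N}r^{-\epsilon_0 k}=r^{-\epsilon_0 N}/(r^{\epsilon_0}-1)$ the conclusion is unaffected.
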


\begin{theorem}[NF, GRH]
\label{limitnf}
For a number field $K,$ an integer $N\geq 10$ and any $\epsilon=\epsilon_0+i\epsilon_1$ such that $\epsilon_0=\Re \epsilon > 0$ we have:
\begin{multline*}
\sum_{q\leq N} \frac{\Phi_q\log q}{q^{\frac{1}{2}+\epsilon}-1}+ Z_{K}\left(\frac{1}{2}+\epsilon\right)+\frac{1}{{\epsilon-\frac{1}{2}}}= \\ 
= O\left(\frac{|\epsilon|^4+|\epsilon|}{\epsilon_0^2}(g+n\log N)\frac{\log^2 N}{N^{\epsilon_0}}\right) + O\left(\sqrt N\right).
\end{multline*}
\end{theorem}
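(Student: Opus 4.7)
Setting $s=\tfrac12+\epsilon$ and using $\tfrac{1}{q^s-1}=\sum_{m\ge 1}q^{-ms}$ together with the series $-Z_K(s)=\sum_{\pg,m\ge 1}\log\N\pg\cdot\N\pg^{-ms}$ (valid for $\Re s>1$ and extending to $\Re s>\tfrac12$ by analytic continuation under GRH), the left-hand side of the theorem can be rewritten as
\[
-\sum_{\N\pg>N}\sum_{m\ge 1}\log\N\pg\cdot\N\pg^{-ms}+\frac{1}{\epsilon-1/2}.
\]
The $m\ge 2$ tail is absolutely bounded by $O\!\left(\frac{n\log N}{\epsilon_0\, N^{2\epsilon_0}}\right)$, which is comfortably inside the first stated error. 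The task then reduces to estimating the prime-only tail $R_N(s):=\sum_{\N\pg>N}\log\N\pg\cdot\N\pg^{-s}$ and extracting its cancellation with $\tfrac{1}{\epsilon-1/2}$.

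To treat $R_N(s)$, I apply Perron's formula to the closely related sum $\tilde S_N=\sum_{\N\mathfrak a\le N}\Lambda_K(\mathfrak a)\,\N\mathfrak a^{-s}$, whose Dirichlet series in the variable $w$ is $-Z_K(s+w)$. Starting from a line $\Re w=c>\max(0,\tfrac12-\epsilon_0)$ with truncation at height $T$, I shift the contour leftwards to $\Re w=-\epsilon_0+\eta$ with a small $\eta>0$, just to the right of the critical line of $\zeta_K(s+w)$ (accessible under GRH). The only poles crossed are $w=1-s$ (residue $\tfrac{N^{1-s}}{1-s}$, from the pole of $\zeta_K$ at $1$) and $w=0$ (residue $-Z_K(s)$); the trivial zeros lie far to the left. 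Combining the residue $\tfrac{N^{1-s}}{1-s}$ with $\tfrac{1}{\epsilon-1/2}=-\tfrac{1}{1-s}$ produces $\tfrac{N^{1-s}-1}{1-s}$, which is $O(\sqrt N)$ (by Taylor expansion when $|1-s|$ is small and by direct majorization when $|1-s|$ is bounded below); meanwhile the difference $S_N-\tilde S_N$, consisting of the tails $\N\pg\le N<\N\pg^m$ with $m\ge 2$, contributes $O(N^{1/2-\epsilon_0})\subseteq O(\sqrt N)$.

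The main analytic step is bounding the resulting vertical-line integral $\tfrac{1}{2\pi i}\int_{(-\epsilon_0+\eta)}(-Z_K(s+w))\tfrac{N^w}{w}\,dw$. Under GRH, the partial-fraction expansion of $Z_K$ combined with the Riemann--von Mangoldt type count $\#\{\rho=\tfrac12+i\gamma:|\gamma-t|\le 1\}\ll g+n\log(|t|+2)$ yields the pointwise bound $|Z_K(\sigma+it)|\ll \eta^{-1}(g+n\log(|t|+2))$ on the line $\Re(s+w)=\tfrac12+\eta$. Inserting this together with $|N^w|=N^{-\epsilon_0+\eta}$ and the factor $1/|w|$, integrating over $|\Im w|\le T$, handling the Perron truncation error, and finally optimizing $T$ and $\eta$, produces the first stated error term; the rational factor $\tfrac{|\epsilon|^4+|\epsilon|}{\epsilon_0^2}$ records the geometric interaction of the pole at $w=0$, the pole at $w=1-s$, and the nearest zero of $\zeta_K(s+w)$, which lies at distance $\ge\epsilon_0$ from the contour. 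The principal obstacle is exactly this bookkeeping: extracting the stated explicit dependence on $|\epsilon|,\epsilon_0,g,n$ requires a simultaneous balancing of the Perron truncation height $T$, the offset $\eta$, and the zero-density input under GRH, which is more delicate than the corresponding balancing in the function-field case.
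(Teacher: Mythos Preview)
Your Perron-plus-contour-shift strategy is genuinely different from the paper's proof. The paper applies the Weil explicit formula (Theorem~\ref{WeilTh}) to the truncated test function $F_{N,\epsilon}(x)=e^{-\epsilon|x|}$ on $|x|<\log(N+\tfrac12)$, evaluates the prime sum and archimedean terms directly, and then identifies the main part of the zero sum $\sum_\rho\phi(\rho)$ with $Z_K(\tfrac12+\epsilon)$ via Stark's partial-fraction formula~(\ref{StarkFormula}); the most delicate step is bounding the oscillatory piece $\sum_{\rho}\frac{\sin(ty)}{t}$, for which a second explicit formula (Landau's~(\ref{Landau})) and an argument with distributions are brought in. Your approach trades this hand-crafted zero-sum analysis for a pointwise bound on $Z_K$ near the critical line and an optimization in $(T,\eta)$; once carried out, it should produce an error of the same shape $(g+n\log N)\log^2 N/N^{\epsilon_0}$ times some rational function of $|\epsilon|$ and $\epsilon_0$, possibly with a milder $\epsilon_0$-dependence (a $\log(1/\epsilon_0)$ from $\int|w|^{-1}\,|dw|$ rather than the paper's $\epsilon_0^{-2}$).

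There is, however, a real gap in your write-up. The opening rewriting of the left-hand side as $-\sum_{\N\pg>N}\sum_{m\ge 1}\log\N\pg\cdot\N\pg^{-ms}+\frac{1}{\epsilon-1/2}$ is not valid for $\epsilon_0<\tfrac12$: the Dirichlet series for $-Z_K(s)$ diverges there, and so does the prime tail $R_N(s)$; analytic continuation of $Z_K$ does not furnish a meaning for the tail of its divergent series. Your actual argument never uses this---it works with the finite sum $\tilde S_N$ and Perron---so the fix is simply to start there and drop the heuristic. Secondly, the justification of the factor $\frac{|\epsilon|^4+|\epsilon|}{\epsilon_0^2}$ is only asserted, not derived: when $\epsilon_0$ is smaller than your intended offset $\eta\approx 1/\log N$, the pole at $w=0$ sits between the two vertical lines only if $\eta<\epsilon_0$, and the $|w|^{-1}$ integral along the shifted line then depends nontrivially on $\epsilon_0$; you also need $T>|\epsilon_1|$ to enclose the pole at $w=1-s$. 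Tracking these constraints through the optimization is exactly the ``delicate bookkeeping'' you acknowledge, and as written it is not done.
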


Let us explain a little bit the meaning of these theorems. It was known before (see \cite{Zyk} and also below) that the identities (without the error terms) of the theorems are true in the asymptotic sense (when $N=\infty$ and $g=\infty$ for families of global fields). Our theorems give the "finite level" versions of these results. They allow to estimate how well the cutoffs of the series for $Z_{K}(s)$ approximate it away from the domain of convergence of this series (which is $\Re s > 1$) when we vary $K.$ 

We give the proof of these theorems in sections \ref{seclimitff} and \ref{seclimitnf} respectively. Both proofs are based on the Weil explicit formula. However, in the number field case the analytical difficulties are rather considerable, so the explicit formula has to be applied three times with different choices of test functions. We note that, as indicated in the remarks in the corresponding sections, in both cases we obtain the new proofs of the basic inequalities from \cite{Tsf92} and \cite{TV02}.

Our next results concern families of global fields $\{K_i\}$ with growing genus $g_i=g(K_i).$ Recall (\cite{TV97},\cite{TV02}) that a family of global fields is called asymptotically exact if the limits
$$\phi_{\alpha}=\phi_{\alpha}(\{K_i\})=\lim_{i\to\infty}\frac{\Phi_{\alpha}(K_i)}{g_i}$$
exist for each $\alpha$ which is a power of $r$ in the function field case and each prime power and $\alpha=\bbR$ and $\alpha=\bbC$ in the number field case. The numbers $\phi_{\alpha}$ are called the Tsfasman--Vl\u{a}du\c{t} invariants of the family $\{K_i\}.$ From now on we assume that all our families are asymptotically exact. 

We introduce the limit zeta function of a family $\{K_i\}$ as $$\zeta_{\{K_i\}}(s)=\prod\limits_{q} (1-q^{-s})^{-\phi_{q}}.$$ We will also denote by $Z_{\{K_i\}}(s)=-\sum\limits_{q}\frac{\phi_{q}\log q}{q^{s}-1}$ its logarithmic derivative. It follows from the basic inequality (cf. \cite{Tsf92} and \cite{TV02} or sections \ref{seclimitff} and \ref{seclimitnf} of this paper) that both the product and the sum converge absolutely for $\Re s \geq \frac{1}{2}$ and thus define analytic functions for $\Re s > \frac{1}{2}.$ 

Let us first formulate a corollary of theorems \ref{limitff} and \ref{limitnf}.

\begin{corollary}
\label{asscor}
For an asymptotically exact family of global fields $\{K_i\},$ an integer $N\geq 10$ and any $\epsilon=\epsilon_0+i\epsilon_1$ such that $\epsilon_0=\Re \epsilon > 0$ the following holds:
\begin{enumerate}
\item in the function field case:
$$\sum_{f=1}^{N}\frac{f\phi_{r^f}}{r^{(\frac{1}{2}+\epsilon)f}-1} + Z_{\{K_i\}}\left(\frac{1}{2}+\epsilon\right)=O\left(\frac{1}{\epsilon_0 r^{\epsilon_0 N}} \right);$$
\item in the number field case with the assumption of GRH:
$$\sum_{q\leq N}\frac{\phi_{q}\log q}{q^{\frac{1}{2}+\epsilon}-1} + Z_{\{K_i\}}\left(\frac{1}{2}+\epsilon\right)=O\left(\frac{(|\epsilon|^4+|\epsilon|)\log^3 N}{\epsilon_0^2 N^{\epsilon_0}}\right).$$
\end{enumerate}
\end{corollary}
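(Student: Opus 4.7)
The plan is to apply Theorem~\ref{limitff} (resp.\ Theorem~\ref{limitnf}) to each field $K_i$ in the family, divide by $g_i$, and let $i\to\infty$ for fixed $N$. In the function field case this rewrites the conclusion of Theorem~\ref{limitff} as
\begin{equation*}
\sum_{f=1}^{N}\frac{f\,\Phi_{r^f}(K_i)/g_i}{r^{(1/2+\epsilon)f}-1} + \frac{Z_{K_i}(1/2+\epsilon)}{g_i} + \frac{1}{g_i(r^{-1/2+\epsilon}-1)} = O\!\left(\frac{1}{\epsilon_0 r^{\epsilon_0 N}}\right) + O\!\left(\frac{r^{N/2}}{g_i}\right),
\end{equation*}
and the number field version is analogous. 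After division by $g_i$ the first error term is uniform in $i$, whereas the second vanishes as $g_i\to\infty$.

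For fixed $N$, asymptotic exactness forces the finite sum on the left to pass to the limit term by term. The corrective summand $1/(g_i(r^{-1/2+\epsilon}-1))$ and the second error $O(r^{N/2}/g_i)$ both disappear. It follows that $Z_{K_i}(1/2+\epsilon)/g_i$ lies, for large $i$, in an interval of length $O(1/(\epsilon_0 r^{\epsilon_0 N}))$ around $-\sum_{f=1}^{N}f\phi_{r^f}/(r^{(1/2+\epsilon)f}-1)$. Since $N$ may be chosen as large as we wish, the sequence $(Z_{K_i}(1/2+\epsilon)/g_i)_i$ is Cauchy and hence converges; by the basic inequality, which guarantees absolute convergence of $\sum_f f\phi_{r^f}/(r^{(1/2+\epsilon)f}-1)$ for $\Re s\geq 1/2$, its limit must equal $Z_{\{K_i\}}(1/2+\epsilon)$. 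Substituting this identification in the rescaled identity yields the function field statement.

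The number field case is handled identically. The pole contribution $1/((\epsilon-1/2)g_i)$ vanishes in the limit, and the factor $(g+n\log N)/g_i=1+(n_i/g_i)\log N$ multiplying the first error term of Theorem~\ref{limitnf} is $O(\log N)$ because $n_i/g_i\to\phi_{\bbR}+2\phi_{\bbC}$ remains bounded; this converts the $\log^2 N$ in the error of Theorem~\ref{limitnf} into the $\log^3 N$ of the number field statement, while the $O(\sqrt{N}/g_i)$ term disappears as before.

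The main technical point is to establish $Z_{K_i}(s)/g_i\to Z_{\{K_i\}}(s)$, which is not part of the definition of the limit zeta function but is deduced from the uniform-in-$i$ estimates of the main theorems via the two-scale limit ($i\to\infty$ first, then $N\to\infty$) sketched above. Once this convergence is in hand, both parts of the corollary are immediate.
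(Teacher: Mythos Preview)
Your argument is essentially the paper's own proof: divide Theorem~\ref{limitff} (resp.\ Theorem~\ref{limitnf}) by $g_i$, first establish $Z_{K_i}(\tfrac12+\epsilon)/g_i\to Z_{\{K_i\}}(\tfrac12+\epsilon)$ via the two-scale limit, and then read off the corollary. The only point you gloss over and the paper makes explicit is that the pole term $1/(r^{-1/2+\epsilon}-1)$ (resp.\ $1/(\epsilon-\tfrac12)$) is singular at $\epsilon=\tfrac12+\tfrac{2\pi ik}{\log r}$ (resp.\ $\epsilon=\tfrac12$), so one should first argue away from these values and then invoke continuity of both sides in~$\epsilon$ to cover them.
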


This corollary, in particular, implies the convergence of the logarithmic derivatives of zeta functions of global fields to the logarithmic derivative of the limit zeta function for $\Re s > \frac{1}{2}$. This result (without an explicit error term but with a much easier proof) has been recently obtained in \cite{Zyk}.

Our next result concerns the behaviour of $Z_{\{K_i\}}(s)$ at $s = \frac{1}{2}.$

\begin{theorem}
\label{onehalf}
For an asymptotically exact family of global fields $\{K_i\}$ there exists a number $\delta>0$ depending on $\{K_i\}$ such that:
\begin{enumerate}
\item in the function field case:
$$\sum_{f=1}^{N}\frac{f\phi_{r^f}}{r^{\frac{f}{2}}-1}+Z_{\{K_i\}}\left(\frac{1}{2}\right)=O(r^{-\delta N});$$

\item in the number field case, assuming GRH, we have:
$$\sum_{q\leq N} \frac{\phi_q\log q}{\sqrt{q}-1}+Z_{\{K_i\}}\left(\frac{1}{2}\right)=O(N^{-\delta}).$$
\end{enumerate}
\end{theorem}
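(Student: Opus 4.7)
The left-hand side of each formula in Theorem~\ref{onehalf} equals $-T(N)$, where $T(N)$ is the tail of the convergent series $-Z_{\{K_i\}}(1/2)$: in the function field case $T(N)=\sum_{f>N} f\phi_{r^f}/(r^{f/2}-1)$ and in the number field case $T(N)=\sum_{q>N} \phi_q \log q/(\sqrt{q}-1)$. Convergence of $-Z_{\{K_i\}}(1/2)$ is guaranteed by the basic inequality (see Sections \ref{seclimitff}--\ref{seclimitnf}), so the content of Theorem~\ref{onehalf} is purely a family-dependent quantitative rate of decay for this tail. The strategy is to deduce the rate from Corollary~\ref{asscor} by a bridging argument.

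Applied at a real shift $\epsilon>0$, Corollary~\ref{asscor} bounds the shifted tail
\[ T_\epsilon(N) := \sum_{q>N}\frac{\phi_q\log q}{q^{1/2+\epsilon}-1} \]
(and its FF analogue) by $O(1/(\epsilon\, r^{\epsilon N}))$ and by $O((|\epsilon|^4+|\epsilon|)\log^3 N/(\epsilon^2 N^{\epsilon}))$ respectively. The first step is to write
\[ T(N) = T_\epsilon(N) + \sum_{q>N} \phi_q\log q\left(\frac{1}{\sqrt q - 1} - \frac{1}{q^{1/2+\epsilon}-1}\right) \]
and, using the elementary identity
\[ \frac{1}{\sqrt q - 1}-\frac{1}{q^{1/2+\epsilon}-1} = \frac{q^{1/2}(q^{\epsilon}-1)}{(\sqrt q - 1)(q^{1/2+\epsilon}-1)} \ll \frac{q^{\epsilon}-1}{q^{1/2+\epsilon}}, \]
reduce the problem to controlling the correction sum $\sum_{q>N}\phi_q\log q\cdot(q^{\epsilon}-1)/q^{1/2+\epsilon}$. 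One then optimises the parameter $\epsilon$ against $N$ (taking $\epsilon$ a small constant in the FF case and $\epsilon \asymp 1/\log N$ in the NF case) so that the corollary's bound on $T_\epsilon(N)$ and the bound on the correction term are of the same order; this balance should produce the claimed rates $O(r^{-\delta N})$ and $O(N^{-\delta})$.

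The \textbf{main obstacle} is to bound the correction sum, which requires a decay estimate on $\phi_q$ that is slightly stronger than what the basic inequality gives: equivalently, a family-dependent analytic continuation of $-Z_{\{K_i\}}(s)$ across the critical line $\Re s = 1/2$ by a small amount $\eta>0$. In the function field case this should follow from the rationality of each $\zeta_{K_i}$ together with an analysis of the limiting distribution of the inverse roots $\pi_j$ on the circle $|z|=\sqrt{r}$ (in the spirit of the Tsfasman--Vl\u{a}du\c{t} equidistribution). In the number field case, one may combine GRH with a more elaborate application of the Weil explicit formula, following the template of the proof of Theorem~\ref{limitnf}, to produce the analogous extension. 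The width $\eta$ of the extension delivers the family-dependent $\delta$; one then chooses $\epsilon=\eta/2$ in the bridging identity above and the theorem follows.
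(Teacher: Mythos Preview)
You have correctly isolated the content of the theorem: it is a bound on the tail $T(N)$ of the series $-Z_{\{K_i\}}(1/2)$, and you have correctly recognised that the decisive input is an analytic continuation of $Z_{\{K_i\}}(s)$ to some half-plane $\Re s>\tfrac12-\eta$ with $\eta>0$ depending on the family. But the proposal has two defects.

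First, you do not actually \emph{prove} the analytic continuation; you only gesture at equidistribution of the $\pi_j$ or at a further use of the explicit formula. Neither is the right mechanism, and neither is carried out. The paper's argument is a one-line application of Landau's theorem on Dirichlet series with non-negative coefficients: the series for $\log\zeta_{\{K_i\}}(s)$ has non-negative coefficients and, by the basic inequality, converges at the real point $s=\tfrac12$; hence $\tfrac12$ cannot be its abscissa of convergence, so the series (and therefore its termwise derivative $-Z_{\{K_i\}}(s)$) converges for $\Re s>\tfrac12-\delta_0$ for some $\delta_0>0$. This is the entire source of the family-dependent $\delta$, and it is missing from your sketch.

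Second, once that continuation is in hand, the detour through Corollary~\ref{asscor} is superfluous and in fact obscures the estimate. With any $0<\delta<\delta_0$ one writes directly
\[
T(N)=\sum_{f>N}\frac{f\phi_{r^f}}{r^{(\frac12-\delta)f}-1}\cdot\frac{r^{(\frac12-\delta)f}-1}{r^{f/2}-1}
\le\Bigl(\sum_{f\ge1}\frac{f\phi_{r^f}}{r^{(\frac12-\delta)f}-1}\Bigr)\cdot\frac{r^{(\frac12-\delta)N}-1}{r^{N/2}-1}=O(r^{-\delta N}),
\]
and similarly in the number field case. By contrast, your bridging identity $T(N)=T_\epsilon(N)+\text{(correction)}$ leaves a correction term of size $\sum_{q>N}\phi_q\log q\,(q^{\epsilon}-1)/q^{1/2+\epsilon}$, which for large $q$ is comparable to $T(N)$ itself; so nothing has been gained, and you are forced back to the continuation anyway. (Note also that with $\epsilon\asymp 1/\log N$ the Corollary~\ref{asscor} bound on $T_\epsilon(N)$ is only $O(\log^4 N)$, which does not decay.)
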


Let us formulate a corollary  of this result which, in a sense, improves the explicit Brauer--Siegel theorem from \cite{Leb}. We denote by $\varkappa_{K_i}=\Res\limits_{s=1}\zeta_{K_i}(s)$ the residue of $\zeta_{K_i}(s)$ at $s=1.$ We let $\kappa=\kappa_{\{K_i\}}=\lim\limits_{i\to\infty}\frac{\log\varkappa_{K_i}}{g_i}.$ One knows (\cite{TV97} and \cite{TV02}) that for an asymptotically exact family this limit exists and equals $\log \zeta_{\{K_i\}}(1)$ (we assume GRH in the number field case). In fact, in the number field case it can be seen as a generalization of the classical Brauer--Siegel theorem (cf. \cite{Lan94}).

\begin{corollary}
\label{BSExpl}
For an asymptotically exact family of global fields $\{K_i\}$ there exists a number $\delta>0$ depending on $\{K_i\}$ such that:
\begin{enumerate}
\item in the function field case:
$$\sum_{f=1}^{N}\phi_{r^f}\log\frac{r^f}{r^f-1}=\kappa+O\left(\frac{1}{r^{\left(\frac{1}{2}+\delta\right)N}N}\right);$$
\item assuming GRH, in the number field case:
$$\sum_{q\leq N}\phi_{q}\log\frac{q}{q-1}=\kappa+O\left(\frac{1}{N^{\frac{1}{2}+\delta}\log N}\right).$$
\end{enumerate}
\end{corollary}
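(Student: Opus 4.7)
The plan is to deduce the corollary from Theorem~\ref{onehalf} by summation by parts applied to the tail of the series defining $\kappa$. Since $\kappa=\log\zeta_{\{K_i\}}(1)=\sum_{q}\phi_q\log\frac{q}{q-1}$ (the expansion recalled just before the corollary), the quantity to estimate is the tail
\[
\kappa-\sum_{q\le N}\phi_q\log\frac{q}{q-1}=\sum_{q>N}\phi_q\log\frac{q}{q-1},
\]
with the analogous expression indexed by $f$ in the function field case.

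The key observation is that each tail term factors as a summand of Theorem~\ref{onehalf} times a slowly varying weight. In the number field case set
\[
a_q=\frac{\phi_q\log q}{\sqrt q-1},\qquad w(x)=\frac{(\sqrt x-1)\log\frac{x}{x-1}}{\log x},
\]
so that $\phi_q\log\frac{q}{q-1}=a_q\,w(q)$, and Theorem~\ref{onehalf}(2) furnishes the tail estimate $V(x):=\sum_{q>x}a_q=O(x^{-\delta})$. The Taylor expansion $\log\frac{x}{x-1}=\frac{1}{x-1}+O(x^{-2})$ gives $w(x)\sim(\sqrt x\log x)^{-1}$ and $|w'(x)|\ll(x^{3/2}\log x)^{-1}$. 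Abel summation then yields
\[
\sum_{q>N}a_q\,w(q)=w(N)V(N)+\int_N^\infty V(x)w'(x)\,dx=O\!\left(\frac{1}{N^{1/2+\delta}\log N}\right),
\]
both terms contributing the same order.

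For the function field case I repeat the computation with $a_f=\frac{f\phi_{r^f}}{r^{f/2}-1}$ and discrete weight $w_f=\frac{(r^{f/2}-1)\log\frac{r^f}{r^f-1}}{f}\sim(f\,r^{f/2})^{-1}$; discrete summation by parts combined with the tail bound $T_N=O(r^{-\delta N})$ from Theorem~\ref{onehalf}(1) gives $\sum_{f>N}a_f w_f=O\!\left(\frac{1}{N\,r^{(1/2+\delta)N}}\right)$, as required.

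The principal obstacle is purely technical: verifying the asymptotics of $w$ and its derivative (or discrete difference) and checking that the integral contribution in Abel summation does not dominate the boundary term $w(N)V(N)$. Once these asymptotics are in hand, the proof reduces to bounding $\int_N^\infty x^{-3/2-\delta}(\log x)^{-1}\,dx$, which produces exactly the stated error term.
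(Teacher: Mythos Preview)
Your proof is correct and follows essentially the same route as the paper: both factor the tail terms as (summand from Theorem~\ref{onehalf}) times the decreasing weight $w_f=\frac{(r^{f/2}-1)}{f}\log\frac{r^f}{r^f-1}\sim (f\,r^{f/2})^{-1}$ (resp.\ $w(q)\sim(\sqrt q\,\log q)^{-1}$) and then apply the tail bound $O(r^{-\delta N})$ (resp.\ $O(N^{-\delta})$). The only difference is that the paper exploits the nonnegativity $\phi_{r^f}\ge 0$ together with the monotonicity of $w_f$ to bound $\sum_{f>N}a_f w_f\le w_N\sum_{f>N}a_f$ in one line, bypassing the Abel summation you carry out.
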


We prove theorem \ref{onehalf} and both of the corollaries \ref{asscor} and \ref{BSExpl} in the section \ref{seconehalf}.

\subsection*{Acknowledgements} We would like to thank Jean-Fran\c{c}ois Burnol for his more than significant help with the estimates of the sums over zeroes of zeta functions. 

\section{Proof of theorem \ref{limitff}}
\label{seclimitff}

We will use the following analogue of Weil explicit formula for zeta functions of function fields, see \cite{Ser85} or \cite{LT} (in the case of varieties over finite fields) for a proof. 

\begin{theorem}
For a sequence ${v}=(v_n)$ such that $\sum\limits_{n=1}^{\infty} v_n r^{\frac{n}{2}}$ is convergent, the series $\sum\limits_{n=1}^{\infty} {v_n}r^{-\frac{n}{2}} \sum\limits_{m|n}m\Phi_{r^m}$ is also convergent and one has the following equality: 
\begin{equation*}
\sum_{n=1}^{\infty}v_n r^{-\frac{n}{2}} \sum_{f|n}f\Phi_{r^f}=\psi_v(r^{1/2})+\psi_v(r^{-1/2})-\sum_{j=1}^g\left(\psi_v\left(\frac{\pi_j}{\sqrt{r}}\right) +\psi_v\left(\frac{\bar{\pi}_j}{\sqrt{r}}\right)\right),
\end{equation*}
where the $\pi_j, \bar{\pi}_j$ are the inverse roots of the numerator of the zeta function of $K,$ $g=g_K$  and $\psi_v(t)=\sum\limits_{n=1}^{\infty}v_n t^n$.
\end{theorem}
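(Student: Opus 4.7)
My plan is to prove the explicit formula by computing $\log \zeta_K(s)$ in two ways: once from the Euler product and once from the rationality formula (\ref{rationality}). The comparison of coefficients will yield the classical counting identity
\[
\sum_{f|n} f\Phi_{r^f} = 1 + r^n - \sum_{j=1}^g \bigl(\pi_j^n + \bar\pi_j^n\bigr),
\]
after which the desired equality falls out by multiplying by $v_n r^{-n/2}$ and summing in $n$.

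First, starting from $\zeta_K(s) = \prod_q (1-q^{-s})^{-\Phi_q}$ (with $q$ ranging over positive powers of $r$) and expanding $-\log(1-q^{-s}) = \sum_{m \geq 1} q^{-ms}/m$, I would set $t = r^{-s}$, $q = r^f$, and reindex by $n = fm$ to obtain $\log \zeta_K(s) = \sum_{n \geq 1} \frac{t^n}{n}\sum_{f|n} f\Phi_{r^f}$, valid for $\Re s > 1$. On the other hand, rewriting $(\pi_j t - 1)(\bar{\pi}_j t - 1) = (1-\pi_j t)(1-\bar{\pi}_j t)$ in (\ref{rationality}) and taking logs gives
\[
\log \zeta_K(s) = -\log(1-t)-\log(1-rt)+\sum_{j=1}^g\bigl[\log(1-\pi_j t)+\log(1-\bar{\pi}_j t)\bigr],
\]
again valid for $|t| < r^{-1}$, since in that region $|\pi_j t| < r^{-1/2} < 1$. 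Expanding each logarithm as a geometric series and comparing the $t^n$-coefficients of the two expansions yields the displayed identity for $\sum_{f|n} f\Phi_{r^f}$.

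It remains to multiply this identity by $v_n r^{-n/2}$ and sum over $n$. The four resulting pieces are exactly $\psi_v(r^{-1/2})$, $\psi_v(r^{1/2})$, $-\sum_j \psi_v(\pi_j/\sqrt r)$ and $-\sum_j \psi_v(\bar{\pi}_j/\sqrt r)$, which is the claimed explicit formula. The one point that requires care is the convergence of the rearranged series. The hypothesis that $\sum v_n r^{n/2}$ converges means that the power series $\sum v_n z^n$ has radius of convergence at least $r^{1/2}$, and hence converges absolutely at the three points $r^{-1/2}$, $\pi_j/\sqrt r$ and $\bar{\pi}_j/\sqrt r$, which by the Riemann hypothesis $|\pi_j| = \sqrt r$ all lie strictly inside the disc of convergence. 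Convergence of the sum on the left is then controlled by the uniform bound $\bigl|\sum_{f|n} f\Phi_{r^f}\bigr| \leq 1 + r^n + 2g\,r^{n/2}$, which splits the series into pieces already known to converge. The only delicate step is the conditionally convergent piece $\sum v_n r^{n/2} = \psi_v(r^{1/2})$ at the boundary of the disc, which can be handled by Abel summation; I expect this convergence bookkeeping to be the main obstacle, the algebraic identity itself being a direct consequence of equating the two logarithmic expansions of $\zeta_K$.
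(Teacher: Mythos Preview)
The paper does not prove this theorem at all: it states it as a known tool and refers the reader to Serre's Harvard notes \cite{Ser85} and to Lachaud--Tsfasman \cite{LT} for a proof. So there is no in-paper argument to compare against, and your job is simply to check that your proposal stands on its own.

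It does. Your plan is the standard one: compare the two power-series expansions of $\log\zeta_K$ in $t=r^{-s}$ (Euler product vs.\ the factored rational form~(\ref{rationality})) to get the pointwise identity
\[
\sum_{f\mid n} f\Phi_{r^f}=1+r^{n}-\sum_{j=1}^{g}\bigl(\pi_j^{\,n}+\bar\pi_j^{\,n}\bigr),
\]
then multiply by $v_n r^{-n/2}$ and sum. The convergence bookkeeping is also right: from the hypothesis $\sum v_n r^{n/2}$ converges, the terms $v_n r^{n/2}\to 0$, hence $\psi_v$ has radius of convergence $\ge r^{1/2}$ and converges absolutely at $r^{-1/2}$ and at each $\pi_j/\sqrt r$, $\bar\pi_j/\sqrt r$ (all of modulus $\le 1<r^{1/2}$ since $r\ge 2$). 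The left-hand side then converges because it is termwise the sum of four convergent series, three absolutely convergent and one convergent by assumption.

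One small cleanup: you do not need ``Abel summation'' for the boundary point. By definition $\psi_v(r^{1/2})=\sum_n v_n r^{n/2}$, and this is precisely the series assumed convergent in the hypothesis; no summability device is required. Abel's theorem would only be relevant if you wanted to recover this boundary value as a radial limit, which you do not.
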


Let us take the test sequence $v_n=v_n(N)=\frac{1}{r^{n\epsilon}}$ if $n\leq N$ and $0$ otherwise. Introducing it in the explicit formulae, we get $S_0(N,\epsilon)=S_1(N,\epsilon)+S_2(N,\epsilon)-S_3(N,\epsilon),$ where
\begin{align*}
S_0(N,\epsilon)=& \sum_{n=1}^{N}r^{-n\left(\frac{1}{2}+\epsilon\right)}\sum_{f|n}f\Phi_{r^f},\\
S_1(N,\epsilon)= & \sum_{n=1}^N r^{n\left(\frac{1}{2}-\epsilon\right)},\\
S_2(N,\epsilon)= & \sum_{n=1}^N r^{-n\left(\frac{1}{2}+\epsilon\right)},\\
S_3(N,\epsilon)= & \sum_{j=1}^g\sum_{n=1}^{N}r^{-n\left(\frac{1}{2}+\epsilon\right)}(\pi_j^n+\bar{\pi}_j^n).
\end{align*}
Let us estimate each of the $S_i.$

\subsection*{Calculation of $S_0$:}
Let us first change the summation order in $S_0$:
$$S_0(N,\epsilon)  =  \sum_{n=1}^{N}r^{-n\left(\frac{1}{2}+\epsilon\right)}\sum_{f|n}f\Phi_{r^f} =  \sum_{f=1}^{N}f\Phi_{r^f} \sum_{m=1}^{[ N/f]}\frac{1}{r^{fm\left(\frac{1}{2}+\epsilon\right)}}.$$
Now
\begin{eqnarray*}
R_0(N,\epsilon)&=&\sum_{f=1}^N f\Phi_{r^f} \frac{1}{r^{\left(\frac{1}{2}+\epsilon\right)f}-1} - S_0(N,\epsilon)\\
& = & \sum_{f=1}^{N}f \Phi_{r^f} \left(\frac{1}{r^{\left(\frac{1}{2}+\epsilon\right)f}-1}- \sum_{m=1}^{[ N/f]}r^{-fm\left(\frac{1}{2}+\epsilon\right)}\right) \\
& = & \sum_{f=1}^{N}f\Phi_{r^f}\sum_{m=[ N/f]+1}^\infty r^{-fm\left(\frac{1}{2}+\epsilon\right)}.
\end{eqnarray*}

Taking the absolute values, we can assume that $\epsilon$ is real. Summing the geometric series, we obtain 
$$0\leq \sum_{f=1}^N f\Phi_{r^f} \frac{1}{r^{\left(\frac{1}{2}+\epsilon\right)f}-1} -S_0(N,\epsilon)\leq \sum_{f=1}^{N}f\Phi_{r^f}r^{-\left(\frac{1}{2}+\epsilon\right)[ N/f ] f}\frac{1}{r^{\left(\frac{1}{2}+\epsilon\right)f} -1}.$$

We now use the Weil inequality $f\Phi_{r^f}\leq r^f+1+2g\sqrt{r^f}$, and split the above sum into two parts in the following way. For $f>[ N/2 ]$ we have $[ N/f ]=1$ and for $f\leq [ N/2 ]$ we use the inequality $f [ N/f ]\geq N-f.$ 

\begin{eqnarray*}
|R_0(N, \epsilon)|& \leq & \sum_{f=1}^{N} \frac{\left(1+r^f+2g \sqrt{r^f}\right)}{r^{f \left(\frac{1}{2}+\epsilon\right)[ N/f ]} \left(r^{\left(\frac{1}{2}+\epsilon\right)f}-1\right)}\\
& \leq & \sum_{f=1}^{[ N/2]}\frac{2\,r^{\left(\frac{1}{2}-\epsilon\right)f}+4g\,r^{-f\epsilon}}{r^{(N-f)\left(\frac{1}{2}+\epsilon\right)}} +\sum_{f>[ N/2]}^N \frac{2\,r^{\left(\frac{1}{2}-\epsilon\right)f}+4g\,r^{-f\epsilon}}{r^{f\left(\frac{1}{2}+\epsilon\right)}} \\
& \leq & \frac{2}{r^{N\left(\frac{1}{2}+\epsilon\right)}}\sum_{f=1}^{[ N/2 ]} (r^{f}+2\,g\,r^{\frac{f}{2}})+2\sum_{f>[ N/2]} (r^{-2\epsilon f}+2\,g\,r^{-\left(\frac{1}{2}+ 2\epsilon\right)f})\\
& \leq & \frac{2}{r^{N\left(\frac{1}{2}+\epsilon\right)}} \left (\frac{r^{\frac{N}{2}+1}-r}{r-1}+2g\,\frac{r^{\frac{N}{4}+\frac{1}{2}}-r^{\frac{1}{2}}} {r^{\frac{1}{2}}-1} \right)  
+ \frac{2r^{-\epsilon N}}{1-r^{-2\epsilon}}+\frac{4g\, r^{-\frac{N}{4}-\epsilon N}}{1-r^{-\frac{1}{2}-2\epsilon}}\\
& \leq & \frac{16}{r^{\epsilon N}} \left ( 2g\,r^{-\frac{N}{4}}+\frac{1}{r^{\epsilon}-1}\right) \leq  \frac{32}{r^{\epsilon N}} \left ( g\,r^{-\frac{N}{4}}+\frac{1}{\epsilon}\right).
\end{eqnarray*}

\subsection*{Calculation of $S_1$:}

$$0\leq |S_1(N,\epsilon)|\leq r^{\frac{1}{2}-\epsilon_0}\cdot\frac{r^{\left(\frac{1}{2}-\epsilon_0\right)N}-1}{r^{\frac{1}{2}-\epsilon_0}-1}\ll r^{\frac{N}{2}}.$$

\subsection*{Calculation of $S_2$:}

$$0\leq |S_2(N,\epsilon)|\leq \frac{1-r^{-\left(\frac{1}{2}+\epsilon_0\right)N}}{r^{\frac{1}{2}+\epsilon_0}-1}\leq 4.$$

\subsection*{Calculation of $S_3$:}
$$R_3(N, \epsilon)=S_3(N,\epsilon)-\sum_{j=1}^g \left(\frac{\pi_j}{r^{\frac{1}{2}+\epsilon}-\pi_j} +\frac{\bar{\pi}_j}{r^{\frac{1}{2}+\epsilon}-\bar{\pi}_j}\right)=-\sum_{j=1}^g\sum_{n=N+1}^\infty \left(\frac{\pi_j}{r^{\frac{1}{2}+\epsilon}}\right)^n+\left(\frac{\bar{\pi}_j}{r^{\frac{1}{2}+\epsilon}}\right)^n.$$
The absolute value of the right hand side can be bounded using the fact that $|\pi_j|\leq r^{\frac{1}{2}}$:
$$|R_3(N,\epsilon)|=\left|\sum_{j=1}^g\sum_{n=N+1}^\infty \left(\frac{\pi_j}{r^{\frac{1}{2}+\epsilon}}\right)^n+\left(\frac{\bar{\pi}_j}{r^{\frac{1}{2}+\epsilon}}\right)^n\right|\leq 2g\, \frac{r^{-N\epsilon_0}}{r^{\epsilon_0}-1}\leq 4g\, \frac{r^{-N\epsilon_0}}{\epsilon_0}.$$

From the expression (\ref{rationality}) of $\zeta_K(s)$ as rational function in $t=r^{-s}$ we can easily deduce the following formula for its logarithmic derivative:
$$Z_K\left(\frac{1}{2}+\epsilon\right)=-\frac{1}{r^{\frac{1}{2}+\epsilon}-1}-\frac{1}{r^{-\frac{1}{2}+\epsilon}-1} + \sum_{j=1}^g\left( \frac{\pi_j}{r^{\frac{1}{2}+\epsilon}-\pi_j} +\frac{\bar{\pi}_j}{r^{\frac{1}{2}+\epsilon}-\bar{\pi}_j}\right).$$

Putting it all together we get the statement of the theorem. \qed

\begin{remark}
Using our theorem we can easily reprove the basic inequality from \cite{TV97}. We take a real $\epsilon < \frac{1}{4},$ and remark that
$$Z_{K}\left(\frac{1}{2}+\epsilon\right)+\frac{1}{r^{\frac{1}{2}+\epsilon}-1}+\frac{1}{r^{-\frac{1}{2}+\epsilon}-1}+g=\sum_{j=1}^g\left( \frac{\pi_j}{r^{\frac{1}{2}+\epsilon}-\pi_j} +\frac{\bar{\pi}_j}{r^{\frac{1}{2}+\epsilon}-\bar{\pi}_j}+1\right)\geq 0,$$ 
as
$$\frac{\pi_j}{r^{\frac{1}{2}+\epsilon}-\pi_j} +\frac{\bar{\pi}_j}{r^{\frac{1}{2}+\epsilon}-\bar{\pi}_j}+1=\frac{r^{1+2\epsilon}-|\pi_j|^2}{(r^{\frac{1}{2}+\epsilon}-\pi_j)(r^{\frac{1}{2}+\epsilon}-\bar{\pi}_j)}\geq 0.$$ 
Now, from the theorem we get that
$$\sum_{f=1}^{N}\frac{f\Phi_{r^f}}{r^{(\frac{1}{2}+\epsilon)f}-1}\leq g + O\left(\frac{g}{\epsilon r^{\epsilon N}}\right) + O(r^{\frac{N}{2}}).$$
We divide by $g$ and first let $g\to\infty$ (varying $K$), after that we let $N\to\infty$ and finally we take the limit when $\epsilon\to 0.$ In doing so we obtain the basic inequality from \cite{Tsf92}:
$$\sum_{f=1}^\infty \frac{f\phi_{r^f}}{r^{\frac{f}{2}}-1}\leq 1.$$
\end{remark}

\section{Proof of theorem \ref{limitnf}}
\label{seclimitnf}

Our starting point will be the Weil explicit formula, the proof of which can be found in \cite{Poi} or in \cite[chap. XVII]{Lan94} (with slightly more general conditions on the test functions).

Consider the class $(W)$ of even real valued functions, satisfying the following conditions:
\begin{enumerate}
\item there exists $\epsilon>0$ such that $\int_{0}^\infty F(x)e^{(\frac{1}{2}+\epsilon)x}\, dx$ is convergent in the sense of Cauchy;
\item there exists $\epsilon>0$ such that $ F(x)e^{(\frac{1}{2}+\epsilon)x}$ has bounded variation; 
\item $\frac{F(0)-F(x)}{x}$ has bounded variation;
\item for any $x$ we have $F(x)=\frac{F(x-0)+F(x+0)}{2}.$
\end{enumerate}

For such a function $F$ we define
\begin{equation}
\label{Mellin}
\phi(s)=\int_{-\infty}^{+\infty}F(x)e^{(s-\frac{1}{2})x}\,dx.
\end{equation}

The Weil explicit formula for Dedekind zeta functions of number fields can be stated as follows:

\begin{theorem}[Weil]
\label{WeilTh}
Let $K$ be a number field. Let $F$ belong to the class $(W)$ and let $\phi(s)$ be defined by $(\ref{Mellin}).$ Then the sum $\sum\limits_{|\Im\rho|<T}\phi(\rho),$ where $\rho$ runs through the non-trivial zeroes of the Dedekind zeta-function of $K,$ is convergent when $T\to\infty$ and the limit $\sum\limits_\rho\phi(\rho)$ is given by:

\begin{multline}
\label{Weil}
\sum_\rho \phi(\rho)= F(0)\left(2g-n(\gamma+\log 8\pi)-r_1\frac{\pi}{2}\right)+4\int_0^\infty F(x)\ch\left(\frac{x}{2}\right)\\
+ r_1\int_0^\infty \frac{F(0)-F(x)}{2\ch(\frac{x}{2})}\,dx+{n}\int_0^\infty\frac{F(0)-F(x)}{2\sh(\frac{x}{2})}\,dx -2\sum_{\pg,m}\frac{\log\rmN\pg}{\rmN\pg^\frac{m}{2}}F(m\log\rmN\pg),
\end{multline}
where the last sum is taken over all prime ideals $\pg$ in $K$ and all integers $m\geq 1.$
\end{theorem}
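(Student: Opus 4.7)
The plan is to derive the explicit formula by combining the Hadamard factorization of the completed Dedekind zeta function with contour integration of its logarithmic derivative against the test function $\phi$, in the spirit of Riemann's original argument extended to number fields. Introduce the completed zeta function
$$\xi_K(s) = s(s-1)D_K^{s/2}\bigl(\pi^{-s/2}\Gamma(s/2)\bigr)^{r_1}\bigl((2\pi)^{-s}\Gamma(s)\bigr)^{r_2}\zeta_K(s),$$
which is entire of order one, satisfies $\xi_K(s)=\xi_K(1-s)$, and whose zeros are precisely the non-trivial zeros $\rho$ of $\zeta_K$ (the trivial zeros absorb the gamma-factor poles; $s(s-1)$ kills the pole at $s=1$). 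Hadamard's theorem gives
$$\frac{\xi_K'(s)}{\xi_K(s)} = B + \sum_\rho\!\left(\frac{1}{s-\rho}+\frac{1}{\rho}\right),$$
while direct logarithmic differentiation of the defining product yields
$$\frac{\xi_K'(s)}{\xi_K(s)} = \frac{1}{s}+\frac{1}{s-1}+g-\frac{r_1}{2}\log\pi - r_2\log 2\pi + \frac{r_1}{2}\psi\!\left(\tfrac{s}{2}\right)+r_2\psi(s)+\frac{\zeta_K'(s)}{\zeta_K(s)}.$$

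Next, fix $c>1$ inside the strip where $\phi(s)$ is analytic and $\zeta_K'/\zeta_K(s)$ admits its Dirichlet series. I apply the residue theorem to $\phi(s)\xi_K'(s)/\xi_K(s)$ on the positively oriented rectangle with vertices $c\pm iT$, $1-c\pm iT$; its interior captures exactly the zeros with $|\Im\rho|<T$, each contributing $\phi(\rho)$. The evenness of $F$ forces $\phi(1-s)=\phi(s)$, and the functional equation gives $\xi_K'/\xi_K(1-s) = -\xi_K'/\xi_K(s)$; combining these symmetries under $s\mapsto 1-s$ equates the two vertical sides, producing
$$\sum_{|\Im\rho|<T}\phi(\rho) = \frac{1}{\pi i}\int_{c-iT}^{c+iT}\phi(s)\frac{\xi_K'(s)}{\xi_K(s)}\,ds+R_T.$$
The horizontal pieces $R_T$ are shown to vanish along a sequence $T_n\to\infty$ avoiding ordinates of zeros, using the standard bound $\xi_K'/\xi_K(\sigma\pm iT_n) = O(\log T_n)$ uniformly in $\sigma\in[1-c,c]$ and the decay of $\phi(\sigma+iT)$ ensured by the bounded-variation hypothesis on $F(x)e^{(1/2+\epsilon)x}$.

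Now substitute the explicit decomposition of $\xi_K'/\xi_K$ into the integral and evaluate the four families of terms. The Dirichlet-series piece uses the Mellin-inversion identity
$$\frac{1}{2\pi i}\int_{c-i\infty}^{c+i\infty}\phi(s)\,\N\pg^{-ms}\,ds = \N\pg^{-m/2}F(m\log\N\pg),$$
which follows from $\phi(\tfrac12+it)$ being, up to reflection, the Fourier transform of $F$; summing reproduces $-2\sum_{\pg,m}(\log\N\pg/\N\pg^{m/2})F(m\log\N\pg)$. The simple poles $1/s+1/(s-1)$, after shifting the contour to $\Re s = 1/2$, contribute the residues $\phi(0)+\phi(1) = 2\phi(1) = 4\int_0^\infty F(x)\ch(x/2)\,dx$ by evenness of $F$. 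The digamma terms are handled via Gauss's integral representation
$$\psi(s) = -\gamma+\int_0^\infty\frac{e^{-t}-e^{-st}}{1-e^{-t}}\,dt\qquad(\Re s>0),$$
and the analogous formula for $\psi(s/2)$ obtained by rescaling; interchanging the $t$- and $s$-integrations and applying Mellin inversion to the inner $s$-integrals converts these contributions precisely into $r_1\int_0^\infty (F(0)-F(x))/(2\ch(x/2))\,dx$ and $n\int_0^\infty(F(0)-F(x))/(2\sh(x/2))\,dx$. Collecting the constant factors of $F(0)$ from every source (the $-n\gamma$ from Gauss's formula, the explicit $g, -\tfrac{r_1}{2}\log\pi, -r_2\log 2\pi$ from gamma-factor logarithmic differentiation, and an $-r_1\pi/2$ contribution arising when one rewrites the real-place archimedean kernel in the stated form) and using $n = r_1+2r_2$ assembles the coefficient $2g-n(\gamma+\log 8\pi)-r_1\pi/2$ on the right-hand side.

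The main obstacle is analytic bookkeeping rather than strategy. The chief technical difficulty is twofold: (i) justifying that the horizontal contour pieces vanish as $T\to\infty$, which requires a careful choice of heights $T_n$ and uniform bounds on $\xi_K'/\xi_K$ in the critical strip off the zeros, together with the decay of $\phi(\sigma+iT)$ provided by the hypotheses defining the class $(W)$; and (ii) tracking the many archimedean constants through two levels of Mellin inversion so that the coefficient of $F(0)$ collapses cleanly into $2g-n(\gamma+\log 8\pi)-r_1\pi/2$, with every $-\gamma$, $\log\pi$, and $\log 2$ contribution accounted for. Once these are settled, the formula is a direct translation via contour integration between the zeros side of $\xi_K'/\xi_K$ and the primes-plus-archimedean side.
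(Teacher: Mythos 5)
The paper itself does not prove this theorem: it is quoted from the literature, with references to \cite{Poi} and to \cite[chap.~XVII]{Lan94}, so there is no internal proof to compare against. Your outline is essentially the classical derivation found in those sources — Hadamard factorization of the completed zeta function, contour integration of $\phi(s)\,\xi_K'(s)/\xi_K(s)$ over a rectangle symmetric about the critical line (using $\phi(1-s)=\phi(s)$ and the functional equation to fold the two vertical sides together), Mellin inversion for the prime sum, and Gauss's integral representation of $\psi$ for the archimedean terms — and it is sound as a sketch. If you were to write it out in full, two points need explicit care beyond the bookkeeping you already flag: first, the contour argument only yields convergence along the special heights $T_n$, and to obtain convergence of $\sum_{|\Im\rho|<T}\phi(\rho)$ as $T\to\infty$ (as the theorem asserts) you must combine the zero-density bound $n(j)\ll g+n\log(j+2)$ with the $O(1/|t|)$ decay of $\phi\bigl(\tfrac12+it\bigr)$ furnished by the bounded-variation hypotheses, so that the blocks of zeros between consecutive $T_n$ contribute $o(1)$; second, the normalization $F(x)=\tfrac12\bigl(F(x-0)+F(x+0)\bigr)$ in condition (4) of the class $(W)$ is exactly what makes the Fourier--Mellin inversion step valid at jump discontinuities of $F$, which is relevant here because the paper applies the theorem to the truncated functions $F_{N,\epsilon}$, which do jump. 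With those caveats, your argument reproduces the proof the paper implicitly relies on.
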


First of all, we remark that, if we have a complex valued function $F(x)$ with both real and imaginary parts $F_0(x)$ and $F_1(x)$ being even and lying in $(W),$ we can apply (\ref{Weil}) separately to $F_0(x)$ and $F_1(x).$ Thus the explicit formula, being linear in the test function, is also applicable to the initial complex valued function $F(x).$  

We apply the explicit formula to the function defined by
$$F_{N,\epsilon}(x)=\begin{cases}e^{-\epsilon|x|} & \text{if } |x|<\log (N+\frac{1}{2}),\\
0 & \text{if } |x|>\log (N+\frac{1}{2})
\end{cases}$$
(here $N+\frac{1}{2}$ is take to avoid counting some of the terms with the factor $\frac{1}{2}$).

Next, we estimate each of the terms in (\ref{Weil}).

\subsection{The sum over the primes.} 

\begin{align*}
\sum_{\pg,m}\frac{\log\rmN\pg}{\rmN\pg^\frac{m}{2}}F_{N,\epsilon}(m\log\rmN\pg) & =\sum_{\rmN\pg^m\leq N}\frac{\log{\rmN\pg}}{\rmN\pg^{(\frac{1}{2}+\epsilon)m}}\\
& =\sum_{{\rmN\pg}\leq N}\frac{\log{\rmN\pg}}{\rmN\pg^{\frac{1}{2}+\epsilon}-1} 
-\sum_{{\rmN\pg}\leq N}\log{\rmN\pg}\sum_{m>\frac{\log{N}}{\log\rmN\pg}}\frac{1}{\rmN\pg^{(\frac{1}{2}+\epsilon)m}}.
\end{align*}

We have to estimate the sum:
$$\Delta(N,\epsilon)=\sum_{{\rmN\pg}\leq N}\log{\rmN\pg}\sum_{m>\frac{\log{N}}{\log\rmN\pg}}\frac{1}{\rmN\pg^{(\frac{1}{2}+\epsilon)m}}.$$

Taking the absolute values, we can assume that $\epsilon$ is real. Calculating the remainder term of the geometric series, we get:
$$\Delta(N,\epsilon)\leq (2+\sqrt{2})\sum_{{\rmN\pg}\leq N}\frac{\log{\rmN\pg}}{\rmN\pg^{(\frac{1}{2}+\epsilon)([\frac{\log{N}}{\log\rmN\pg}]+1)}}$$
(for $(1-\rmN\pg^{-1/2-\epsilon})^{-1}\leq (1-2^{-1/2})^{-1}\leq \sqrt{2}(1+\sqrt{2})$).

Let us split the sum into two parts according as whether $\rmN\pg>\sqrt{N}$ or not. Taking into account that $\log\N\pg[\log N/\log\N\pg]\geq \log N-\log \N\pg$ for $\log\N\pg\leq [\log\sqrt{\N\pg}],$ we obtain:
$$\Delta(N,\epsilon)\leq (2+\sqrt{2})\left(\sum_{{\rmN\pg}\leq \sqrt{N}}\frac{\log{\rmN\pg}}{e^{\log\rmN{(\frac{1}{2}+\epsilon)}}}+\sum_{\sqrt{N}<\rmN\pg\leq N}\frac{\log{\rmN\pg}}{\rmN\pg^{(1+2\epsilon)}}\right).$$
Write $$\Delta_1(N,\epsilon)=\sum\limits_{{\rmN\pg}\leq \sqrt{N}}\frac{\log{\rmN\pg}}{e^{\log\rmN{(\frac{1}{2}+\epsilon)}}},$$  $$\Delta_2(N,\epsilon)=\sum\limits_{\sqrt{N}<\rmN\pg\leq N}\frac{\log{\rmN\pg}}{\rmN\pg^{(1+2\epsilon)}}.
$$ 
For $\Delta_1(N,\epsilon)$ we have:
$$\Delta_1(N,\epsilon)\leq \frac{1}{N^{\frac{1}{2}+\epsilon}}\sum\limits_{{\rmN\pg}\leq \sqrt{N}}\log{\rmN\pg}.$$
The last sum can be estimated with the help of Lagarias and Odlyzko results (which use GRH, cf. \cite[Theorem 9.1]{LO}):
$$\sum_{{\rmN\pg}\leq \sqrt{N}}\log{\rmN\pg}\leq\sum_{{\rmN\pg}^k\leq \sqrt{N}}\log{\rmN\pg}= \sqrt{N}+ O(N^{\frac{1}{4}}\log{N}(g+n\log{N}))$$ 
with an effectively computable absolute constant in $O$. Thus we get:
$$\Delta_1(N,\epsilon)\leq \frac{2+\sqrt{2}}{N^{\epsilon}}+ a_0\frac{g\log N+n\log^2{N}}{N^{\frac{1}{4}+\epsilon}}. $$

We can estimate the sum $\Delta_2(N,\epsilon)$ as follows:
$$\Delta_2(N,\epsilon)\leq\int_{\sqrt{N}}^\infty \frac{\log t}{t^{1+2\epsilon}}\,d\pi(t),$$ where $\pi(t)$ is the prime counting function $\pi(t)=\sum\limits_{\rmN\pg\leq t} 1$. As before, according to Lagarias and Odlyzko, $\pi(t)=\int_2^t \frac{\,dx}{\log{x}}+ \delta(t),$ with $|\delta(t)|\leq a_1 \sqrt{t}(g+n\log{t}).$ Thus, substituting, we get:  
$$\Delta_2(N,\epsilon)\leq\int_{\sqrt{N}}^\infty t^{-1-2\epsilon}\,dt+2 |\delta(\sqrt{N})|\frac{\log N}{N^{\frac{1}{2}+\epsilon}}+\left|\int_{\sqrt{N}}^\infty\delta(t)\frac{1-(1+2\epsilon)\log{t}}{t^{2+2\epsilon}}\,dt\right|.$$
We deduce that
$$\Delta_2(N,\epsilon)\leq \frac{1}{2\epsilon N^\epsilon}+2 a_1 (g+n\log{N})
\frac{\log N}{N^{\frac{1}{4}+\epsilon}}+\int_{\sqrt{N}}^\infty  a_1(g+n\log{t}) \frac{|1-(1+2\epsilon)\log{t}|}{t^{\frac{3}{2}+2\epsilon}}\,dt.$$

For $N\geq 8$ we have: 
$$\int_{\sqrt{N}}^\infty  a_1(g+n\log{t}) \frac{|1-(1+2\epsilon)\log{t}|}{t^{\frac{3}{2}+2\epsilon}}\,dt\leq \int_{\sqrt{N}}^\infty  a_1(g+n\log{t}) \frac{(1+2\epsilon)\log{t}}{t^{\frac{3}{2}+2\epsilon}}\,dt.$$
Integrating by parts, we can find that
$$\int_{\sqrt{N}}^\infty \frac{\log{t}}{t^{\frac{3}{2}+2\epsilon}}\,dt=\frac{\log{N}}{2(\frac{1}{2}+2\epsilon)N^{\frac{1}{4}+\epsilon}}+\frac{1}{(\frac{1}{2}+2\epsilon)^2 N^{\frac{1}{4}+\epsilon}},$$
and  $$\int_{\sqrt{N}}^\infty \frac{\log^2{t}}{t^{\frac{3}{2}+2\epsilon}}\,dt=\frac{\log^2{N}}{4(\frac{1}{2}+2\epsilon)N^{\frac{1}{4}+\epsilon}}+\frac{\log{N}}{2(\frac{1}{2}+2\epsilon)^2 N^{\frac{1}{4}+\epsilon}}+\frac{1}{(\frac{1}{2}+2\epsilon)^3 N^{\frac{1}{4}+\epsilon}}.$$

We conclude that the following estimate holds: 
$$\Delta_2(N,\epsilon)\leq \frac{1}{2\epsilon N^\epsilon}+ a_2 \left(\frac{n \log^2 N}{N^{\frac{1}{4}+\epsilon}} +  \frac{g \log N}{N^{\frac{1}{4}+\epsilon}}\right).$$

Putting everything together, we see that:
\begin{equation}
\label{primesest}
|\Delta(N,\epsilon)|\ll \frac{1}{\epsilon_0 N^{\epsilon_0}}+ \frac{\log N}{N^{\frac{1}{4}+\epsilon_0}} (n \log N +  g ).
\end{equation}

\subsection{Archimedean terms}

First of all, 

\begin{equation}
\label{arch1}
\left|\int_0^\infty F_{N,\epsilon}(x) \ch\left(\frac{x}{2}\right)\,dx \right| \leq \int_0^{\log(N+\frac{1}{2})} e^{(\frac{1}{2}-\epsilon_0) x}\,dx =  \frac{(N+\frac{1}{2})^{\frac{1}{2}-\epsilon_0}-1}{\frac{1}{2}-\epsilon_0}\ll \sqrt N.
\end{equation}

Let 
$$I_{N,\epsilon}=\int_0^\infty \frac{1-F_{N,\epsilon}(x)}{2\sh(\frac{x}{2})}\, dx$$ 
and 
$$I_{\infty,\epsilon}=\int_0^\infty \frac{1-e^{-\epsilon x}}{2\sh(\frac{x}{2})}\,dx.$$

We have for $N\geq 4:$
$$|I_{\infty,\epsilon}-I_{N,\epsilon}|\leq \int_{\log{N}}^\infty \frac{2}{e^{\frac{x}{2}}}\,dx\leq \frac{4}{\sqrt{N}}.$$

Now, 
\begin{align*}
I_{\infty,\epsilon}&= \int_0^\infty \left(\frac{e^{-\frac{x}{2}}}{1-e^{-x}} -\frac{e^{-(\frac{1}{2}+\epsilon) x}}{1-e^{-x}}\right)\,dx\\
&= \int_0^\infty \left(\left(\frac{e^{-\frac{x}{2}}}{1-e^{-x}}-\frac{e^{-x}}{x}\right)+ \left(\frac{e^{-x}}{x}-\frac{e^{-(\frac{1}{2}+\epsilon) x}}{1-e^{-x}}\right)\right)\,dx\\
&=\psi\left(\frac{1}{2}+\epsilon\right)-\psi\left(\frac{1}{2}\right),
\end{align*}
as $$\psi(x)=\frac{\Gamma'(x)}{\Gamma(x)}=\int_0^\infty \left(\frac{e^{-t}}{x}-\frac{e^{-x{t}}}{1-e^{-t}}\right)\,dt.$$

The second integral
$$J_{N,\epsilon}=\int_0^\infty \frac{1-F_{N,\epsilon}(x)}{2\ch(\frac{x}{2})}\, dx$$ 

can be estimated along the same lines using an integral from \cite[3.541]{GR} :
$$\int_0^\infty \frac{e^{-\epsilon x}}{\ch(\frac{x}{2})}\,dx=\psi\left(\frac{1}{4}+\frac{\epsilon}{2}\right)-\psi\left(\frac{3}{4}+\frac{\epsilon}{2}\right).$$

Taking into account that $\psi(2x)=\frac{1}{2}\left(\psi(x)+\psi\left(x+\frac{1}{2}\right)\right)+\log 2,$ we finally obtain:
\begin{equation}
\label{arch2}
\begin{split}
J_{N,\epsilon}&= \frac{\pi}{2} + \log 2+ \psi\left(\frac{1}{4}+\frac{\epsilon}{2}\right)-\psi\left(\frac{1}{2}+\epsilon\right)+O\left(\frac{1}{\sqrt{N}}\right),\\
I_{N,\epsilon}&= \gamma+\log 4 + \psi\left(\frac{1}{2}+\epsilon\right)+O\left(\frac{1}{\sqrt{N}}\right).
\end{split}
\end{equation}

\subsection{The sum over the zeroes: the main term}

Let us estimate now the sum $\sum\limits_\rho\phi(\rho)$ over zeroes of $\zeta_K(s).$ Let $\rho=\frac{1}{2}+it$ be a zero of the zeta function of $K$ on the critical line. Put $y=\log(N+\frac{1}{2}).$ We have 
$$\phi(\rho)=\int_{-y}^{y}e^{-\epsilon|x|+it x} \,dx=\int_0^y e^{(-\epsilon+it)x}\,dx+\int_0^y e^{(-\epsilon-it)x}\,dx,$$ 
so  
$$\phi(\rho)=\frac{2}{\epsilon^2+t^2}(\epsilon+e^{-\epsilon y}(-\epsilon \cos(ty)+t \sin(ty))).$$

We divide the sum over $\rho$ into three parts: 
\begin{align*}
S_1(\epsilon)&=\sum\limits_{\rho=\frac{1}{2}+it} \frac{\epsilon}{\epsilon^2+t^2};\\
S_2(y, \epsilon)&=\sum\limits_{\rho=\frac{1}{2}+it} \frac{\cos(ty)}{\epsilon^2+t^2};\\
S_3(y, \epsilon)&=\sum\limits_{\rho=\frac{1}{2}+it} \frac{t\sin(ty)}{\epsilon^2+t^2};
\end{align*}
so that
$$\sum\limits_\rho\phi(\rho)=2S_1(\epsilon)-2\epsilon e^{-\epsilon y}S_2(y, \epsilon)+2e^{-\epsilon y}S_3(y,\epsilon).$$

Let us relate the sum $S_1(\epsilon)$ to $Z_K(s),$ the logarithmic derivative of $\zeta_K(s)$. Stark's formula (cf. \cite[(9)]{Sta}) gives us the following:
\begin{equation}
\label{StarkFormula}
\sum_\rho \frac{1}{s-\rho}= \frac{1}{s-1}+\frac{1}{s}+g-\frac{n}{2}\log\pi+\frac{r_1}{2}\psi\left(\frac{s}{2}\right) +r_2(\psi(s)-\log 2)+Z_K(s),
\end{equation}
where as before $\psi(s)=\frac{\Gamma'(s)}{\Gamma(s)}.$ Specializing at $s=\frac{1}{2}+\epsilon,$ we obtain:

\begin{multline}
\label{zeromain}
\sum_{\rho=\frac{1}{2}+it} \frac{\epsilon}{\epsilon^2+t^2}= \frac{1}{\epsilon-\frac{1}{2}}+\frac{1}{\epsilon+\frac{1}{2}}+g-\frac{n}{2}\log\pi - r_2\log 2\\ 
+\frac{r_1}{2}\psi\left(\frac{1}{4}+\frac{\epsilon}{2}\right) + r_2\psi\left(\frac{1}{2}+\epsilon\right)+Z_K\left(\frac{1}{2}+\epsilon\right).
\end{multline}

We note that the archimedean factors from the Stark formula and from the initial Weil explicit formula cancel each other. We are left to prove that $S_2(y, \epsilon)$ and $S_3(y, \epsilon)$ are sufficiently small.

\subsection{The sum over the zeroes: the remainder term.}
To estimate 
$$S_2(y, \epsilon)=\sum\limits_{\rho=\frac{1}{2}+it} \frac{\cos(ty)}{\epsilon^2+t^2}$$ 
we take the absolute values of all the terms in the sum so that
\begin{equation}
\label{zeroest}
|S_2(y, \epsilon)|\leq \sum_{\rho=\frac{1}{2}+it}\frac{1}{|\epsilon^2+t^2|}\leq \sum_{\rho=\frac{1}{2}+it}\frac{n(j)}{\epsilon_0^2+(t-|\epsilon_1|)^2},
\end{equation}
where $n(j)$ is the number of zeroes with $|t-j|<1.$ A standard estimate from \cite[Lemma 5.4]{LO} yields $n(j) \ll g+n\log(j+2),$ thus
\begin{align*}
|S_2(y, \epsilon)| &\ll \frac{g+n\log(|\epsilon_1|+2)}{\epsilon_0^2}+ g+n\sum_{j=1}^{|\epsilon_1|+1}\frac{\log j}{|\epsilon_1|+2-j}+ g+n\log(|\epsilon_1|+2)\\
&\ll (g+n\log^2(|\epsilon_1|+2))\left(1+\frac{1}{\epsilon_0^2}\right).
\end{align*}

Let us finally estimate 
$$S_3(y,\epsilon)=\sum_{\rho=\frac{1}{2}+it}\frac{t\sin(ty)}{\epsilon^2+t^2}.$$ 

We have 
$$S_3(y,\epsilon)=\sum_{\rho=\frac{1}{2}+it}\frac{\sin{ty}}{t}-\sum_{\rho=\frac{1}{2}+it}\frac{\epsilon^2\sin(ty)}{t(\epsilon^2+t^2)}=A(y)-B(y,\epsilon).$$ 

The series for the formal derivative of $B(y,\epsilon)$ with respect to $y$ is given by 
$$\sum_{\rho=\frac{1}{2}+it}\frac{\epsilon^2\cos(ty)}{\epsilon^2+t^2}.$$

Using the estimates for $S_2(y, \epsilon)$ we deduce that on any compact subset of $[0, +\infty)$ this series is absolutely and uniformly convergent to $B'(y)$, and we have $|B'(y,\epsilon)|\ll |\epsilon|^2(g+n\log^2(|\epsilon_1|+2))\left(1+\frac{1}{\epsilon_0^2}\right).$ Thus we see that $|B(y)| \ll y|\epsilon|^2(g+n\log^2(|\epsilon_1|+2))\left(1+\frac{1}{\epsilon_0^2}\right),$ since $B(0,\epsilon)=0.$

\subsection{The sum over the zeroes: the difficult part.}

We are left to estimate the term $A(y).$ 

Let us recall a particular case of Weil explicit formula which is due to Landau (cf. \cite{Lan71}):
\begin{equation}
\label{Landau}
\sum\limits_\rho \frac{x^\rho}{\rho}=x-\Psi(x)-r\log{x}-b-\frac{r_1}{2}\log(1-x^{-2})-r_2\log(1-x^{-1}),
\end{equation}
where $\Psi(x)=\sum\limits_{\rmN\pg^k\leq x}\log\rmN\pg,$ $b$ is the constant term of the expansion of $Z_K(s)$ at $0,$ $r= r_1+r_2-1$ and $x$ is not a prime power. This formula is  stated in \cite{Lan71} for $x\geq \frac{3}{2},$ however, applying theorem \ref{WeilTh} to the function
$$F_{x}(y)=\begin{cases}e^{|y|/2} & \text{if } |y|<\log x,\\
0 & \text{if } |y|> \log x,
\end{cases}$$
one can see that it is valid for any $x > 1.$ We also note that by an effective version of the prime ideals theorem (\cite[Theorem 9.1]{LO}) we have the following estimate:
\begin{equation}
\label{LagOdl}
\Psi(x)-x=O\left(x^{\frac{1}{2}}\log{x}(g+n\log{x})\right).
\end{equation}

Now, we introduce $C(x)=\sum\limits_\rho \frac{x^\rho}{\rho},$ $D(x)=\sum\limits_{\rho\neq\frac{1}{2}} \frac{x^\rho}{\rho-\frac{1}{2}}$ and $E(x)=D(x)-C(x).$ From (\ref{Landau}) and (\ref{LagOdl}) we see that $C(x)$ is an integrable function on compact subsets of $\left(1, +\infty\right).$ Using the arguments similar to those from the previous subsection we can deduce that the series for $E(x)$ is absolutely and uniformly convergent on compact subsets of $\left[1, +\infty\right)$ and thus $E(x)$ is a continuous function on this interval. From this we conclude that the series for $D(x)$ is also convergent to a locally integrable function.

If we put $x=e^{y},$ we get 
$$\Re D(e^y)=e^{\frac{y}{2}}\sum_{\rho\neq\frac{1}{2}}\frac{\sin(ty)}{t},$$ 
which is equal to $e^{\frac{y}{2}}A(y)$ up to a term corresponding to a possible zero of $\zeta_K(s)$ at $\rho =\frac{1}{2}.$

Since the series for $C(x)$ is not uniformly convergent, we will have to work with distributions defined by $C(x), D(x)$ and $E(x).$ See \cite{Sch} for the basic notions and results used here. From the fact that a convergent series of distributions can be differentiated term by term we deduce that the following equality holds:
$$\frac{d}{dx}\frac{E(x)}{\sqrt{x}}=\frac{C(x)}{2\sqrt{x^{3}}}.$$ 

We apply (\ref{Landau}) to the right hand side of this formula and integrate from $1+\delta$ to $x$ (here $\delta >0$). The obtained equality will be valid in the sense of distributions, thus almost everywhere for the corresponding locally integrable functions defining these distributions. Since $E(x)$ is continuous, we see that the resulting identity

\begin{multline*}
\frac{E(x)}{\sqrt{x}}= E(1+\delta)+\int_{1+\delta}^x \frac{t-\Psi(t)}{2t^{\frac{3}{2}}}\,dt -r\int_{1+\delta}^x \frac{\log{t}}{2t^{\frac{3}{2}}}\,dt\\
 -\int_{1+\delta}^x \frac{b}{2t^{\frac{3}{2}}}\,dt-\frac{r_1}{2}\int_{1+\delta}^x \frac{\log{(1-t^{-2})}}{2t^{\frac{3}{2}}}\,dt-r_2\int_{1+\delta}^x \frac{\log{(1-t^{-1})}}{2t^{\frac{3}{2}}}\,dt
\end{multline*}
actually holds pointwise on $\left[1+\delta, +\infty\right).$ We use (\ref{LagOdl}) to estimate $t-\Psi(t).$ It is easily seen that all the integrals converge when $\delta\to 0.$ From \cite[10.RH]{Lan71} it follows that $b\ll g+n$. 
$$E(1)=\sum\limits_{\rho\neq \frac{1}{2}} \frac{1}{\rho-\frac{1}{2}}-\sum\limits_{\rho}\frac{1}{\rho}=-\frac{1}{2}\sum\limits_{\rho=\frac{1}{2}+it}\frac{1}{\frac{1}{4}+t^2},$$ 
the first sum being zero as the term in $\rho$ and $1-\rho$ cancel each other. An estimate for the last sum can be made using (\ref{zeroest}). This gives $|E(1)|\ll g+n.$  Putting it all together we see that $|E(x)|\ll \sqrt{x}\log^2 x (g+n\log x).$ The estimate $|C(x)|\ll \sqrt{x}\log^2 x (n+g)$ can be obtained directly using (\ref{LagOdl}). Thus, we conclude that $|A(y)|\ll y^2(g+ny).$

Finally, combining all together we get:
\begin{equation*}
\sum_\rho \phi(\rho) = 2 S_1(\epsilon)+O\left(\frac{|\epsilon|^4+|\epsilon|}{\epsilon_0^2}(g+n\log N)\frac{\log^2 N}{N^{\epsilon_0}}\right).
\end{equation*}

This estimate together with (\ref{primesest}), (\ref{arch1}), (\ref{arch2}) and (\ref{zeromain}) completes the proof of the theorem. \qed

\begin{remark}

Using our theorem we can derive the basic inequality from \cite{TV02}. Indeed, we apply the formula (\ref{zeromain}) to express $Z_K\left(\frac{1}{2}+\epsilon\right)$ via $\sum\limits_{\rho=\frac{1}{2}+it} \frac{\epsilon}{\epsilon^2+t^2}$ plus some archimedean terms. For a real positive $\epsilon < \frac{1}{4}$ the latter sum is non-negative, thus we see that

\begin{multline*}
\sum_{q\leq N} \frac{\Phi_q\log q}{q^{\frac{1}{2}+\epsilon}-1}+ \frac{n}{2}\log\pi + r_2\log 2 -\frac{r_1}{2}\psi\left(\frac{1}{4}+\frac{\epsilon}{2}\right) - r_2\psi\left(\frac{1}{2}+\epsilon\right) \\ 
\leq g+ O\left((g+n\log N)\frac{\log^2 N}{\epsilon N^\epsilon}\right) + O\left(\sqrt N\right).
\end{multline*}

Now, we divide by $g$ and first let $g\to\infty$ (varying $K$), after that we let $N\to\infty$ and finally we take the limit when $\epsilon\to 0.$ Taking into account that $\psi(\frac{1}{2})=-\gamma-2\log 2$ and $\psi(\frac{1}{4})=-\frac{\pi}{2}-\gamma-3\log 2,$ we obtain the basic inequality from \cite{Tsf92}:
$$\sum_{q}\frac{\phi_{q}\log q}{\sqrt{q}-1}+\phi_{\bbR}\left(\log (2\sqrt{2\pi})+\frac{\pi}{4}+\frac{\gamma}{2}\right)+\phi_{\bbC}\left(\log(8\pi)+\gamma \right)\leq 1.$$
\end{remark}

\begin{remark}
The choice of the test function $F_{N,\epsilon}(x)$ in the explicit formula is not accidental. Indeed, the resulting formulas "approximate" the Stark formula (\ref{StarkFormula}) when $N\to \infty.$
\end{remark}

\section{Proof of theorem \ref{onehalf} and of the corollaries}
\label{seconehalf}

We will carry out the proofs in the function field case, the calculations in the number field case being exactly the same.

\begin{proof}[Proof of the corollary \ref{asscor}:]
Assume first that $\epsilon \neq \frac{1}{2}+\frac{2\pi i k}{\log r},$ $k\in \bbZ.$ We note that 
\begin{multline*}
\left|\sum_{f=1}^{\infty}\frac{f\phi_{r^f}}{r^{(\frac{1}{2}+\epsilon)f}-1} + \frac{1}{g_j}Z_{K_j}\left(\frac{1}{2}+\epsilon\right)\right| \leq \\ \leq \left|\sum_{f=N+1}^{\infty}\frac{f\phi_{r^f}}{r^{(\frac{1}{2}+\epsilon)f}-1}\right|  
+\sum_{f=1}^{N}\frac{f \left|\frac{\Phi_{r^f}}{g_j}-\phi_{r^f}\right|}{r^{(\frac{1}{2}+\epsilon)f}-1}+ \frac{1}{g_j}\left|\sum_{f=1}^{N}\frac{f\Phi_{r^f}}{r^{(\frac{1}{2}+\epsilon)f}-1}+Z_{K_j}\left(\frac{1}{2}+\epsilon\right)\right|.
\end{multline*}

Given $\delta >0$ we choose an integer $N$ such that the first sum is less than $\delta$ (this is possible due to the basic inequality) and such that $\epsilon_0 r^{\epsilon_0 N} \geq \frac{1}{\delta}.$ Now, taking $g$ sufficiently large, and using theorem \ref{limitff} as well as the convergence of $\frac{\Phi_{r^f}}{g_j}$ to $\phi_{r^f},$ we conclude that the whole sum is $\ll \delta.$ Thus, we deduce that
\begin{equation}
\label{limitformula}
\lim_{j\to \infty}\frac{Z_{K_j}\left(\frac{1}{2}+\epsilon\right)}{g_j}=Z_{\{K_j\}}\left(\frac{1}{2}+\epsilon\right).
\end{equation}

Now, the corollary immediately follows from theorem \ref{limitff} and (\ref{limitformula}). Though we initially assumed that $\epsilon\neq \frac{1}{2}+\frac{2\pi i k}{\log r},$ the statement still holds for $\epsilon=\frac{1}{2}+\frac{2\pi i k}{\log r}$ as all the function are continuous (and even analytic) for $\Re \epsilon > 0.$
\end{proof}

\begin{remark}
The formula (\ref{limitformula}) no longer holds when $\epsilon=0$ as can be seen from the fact that $Z_K\left(\frac{1}{2}\right)=g_K-1.$ In fact, the identity holds if and only if our family is asymptotically optimal. Whether it holds or not for the logarithm of $\zeta_K(s)$ and not for its derivative seems to be very difficult to say at the moment. Even for quadratic fields this question is far from being obvious. It is known that in the number field case there exists a sequence $(d_i)$ in $\bbN$ of density at least $\frac{1}{2}$  such that
$$\lim\limits_{i\to\infty}\frac{\log \zeta_{\bbQ(\sqrt{d_i})}(\frac{1}{2})}{\log d_i}=0$$
(cf. \cite{IS}). The techniques of the evaluation of mollified moments of Dirichlet $L$- functions used in that paper is rather involved. In general one can prove an upper bound for the limit (cf. \cite{Zyk}). This is analogous to the "easy" inequality in the classical Brauer--Siegel theorem. 

The interest of the question about the behaviour of $\log Z_K\left(\frac{1}{2}\right)$ can be in particular explained by its connection to the behaviour of the order of the Shafarevich--Tate group and the regulator of constant supersingular elliptic curves over function fields, the connection being provided by the Birch and Swinnerton--Dyer conjecture. In general, a similar question can be asked about the behaviour of these invariants in arbitrary families of elliptic curves. Some discussion on the problem is given in \cite{KT} (beware, however, that the proof of the main result there can not be seen as a correct one as the change of limits, which is a key point, is not justified). 

\end{remark}

\begin{proof}[Proof of theorem \ref{onehalf}:]
It follows from the basic inequality that the series defining $\log\zeta_{\{K_i\}}(s)$ converges absolutely for $\Re s \geq \frac{1}{2}.$ The function 
$\log\zeta_{\{K_i\}}(s)$ has a Dirichlet series expansion with positive coefficients, converging for $\Re s \geq \frac{1}{2}.$ Thus, from a standard theorem on Dirichlet series (cf. \cite[Lemma 5.56]{IK}), it must converge in some open domain $\Re s > \frac{1}{2}-\delta_0$ for $\delta_0 > 0,$ defining an analytic function there. It follows that in the same domain the series for $Z_{\{K_i\}}(s)$ converges. Taking any $\delta$ with $0<\delta<\delta_0$ we obtain:
\begin{eqnarray*}
\left|\sum_{f=1}^{N}\frac{f\phi_{r^f}}{r^{\frac{f}{2}}-1}-Z_{\{K_i\}}\left(\frac{1}{2}\right)\right|&=&\left|\sum_{f=N+1}^{\infty}\frac{f\phi_{r^f}}{r^{\left(\frac{1}{2}-\delta\right)f}-1}\cdot\frac{r^{\left(\frac{1}{2}-\delta\right)f}-1}{r^{\frac{f}{2}}-1}\right| \\
 &\leq& \left|\sum_{f=1}^{\infty}\frac{f\phi_{r^f}}{r^{\left(\frac{1}{2}-\delta\right)f}-1}\right|\cdot\frac{r^{\left(\frac{1}{2}-\delta\right)N}-1}{r^{\frac{N}{2}}-1} = O(r^{-\delta N}).
\end{eqnarray*}
This gives the necessary result. 
\end{proof}

\begin{proof}[Proof of the corollary $\ref{BSExpl}$:]
We use theorem \ref{onehalf} to obtain the necessary estimate much in the same spirit as in the proof of theorem \ref{onehalf} itself. Using the function field Brauer--Siegel theorem to find the value for $\kappa,$ we get:
\begin{eqnarray*}
\left|\sum_{f=1}^{N}\phi_{r^f}\log\frac{r^f}{r^f-1}-\kappa\right|&=&\left|\sum_{f=N+1}^{\infty}\frac{f\phi_{r^f}}{r^{\frac{f}{2}}-1}\cdot 
\frac{r^{\frac{f}{2}}-1}{f}\cdot\log\frac{r^f}{r^f-1}\right| \\
 &\leq& \left|\sum_{f=N+1}^{\infty}\frac{f\phi_{r^f}}{r^{\frac{f}{2}}-1}\right|\cdot \frac{r^{\frac{N}{2}}-1}{N}\cdot\log\frac{r^N}{r^N-1}\\
  &=& O(r^{-\delta N})\cdot O\left(\frac{r^{-\frac{N}{2}}}{N}\right).
\end{eqnarray*}
Indeed, $N\mapsto \frac{1}{N}(r^{\frac{N}{2}}-1)\log\frac{r^N}{r^N-1}$ is decreasing for $N\geq 2.$ The required estimate follows. 
\end{proof}

\begin{remark}
Actually, our method gives an easy and conceptual proof of the explicit version of the Brauer--Siegel theorem from \cite{Leb} (which is roughly speaking the statement of corollary \ref{BSExpl} with $\delta=0$). It shows that the rate of convergence in the Brauer--Siegel theorem essentially depends on how far to the left the limit zeta function $\zeta_{\{K_i\}}(s)$ is analytic. In the number field case we even save $\log^2 N$ in the estimate of the error term compared to what is proven in \cite{Leb}.
\end{remark}

\end{document}